\newcommand{\A}{\mathbb{A}}
\newcommand{\Spec}{\operatorname{Spec}}
\newcommand{\trdeg}{{\rm tr.deg}\:}
\newtheorem{thm}{Theorem}[section]
\newtheorem{prop}[thm]{Proposition}
\newtheorem{lem}[thm]{Lemma}
\newtheorem{question}[thm]{Question}
\newtheorem{remark}[thm]{Remark}
\begin{document}
\title[Retracts of polynomial rings]{A note on retracts of polynomial rings in three variables}
\author{Takanori Nagamine}
\address[T. Nagamine]{Graduate School of Science and Technology, Niigata University, 8050 Ikarashininocho, Nishi-ku, Niigata 950-2181, Japan}
\email{t.nagamine14@m.sc.niigata-u.ac.jp}
\date{November 24, 2018}
\subjclass[2010]{Primary 13B25; Secondary 14A10}
\keywords{Polynomial rings; Retracts; Zariski's cancellation problem}
\thanks{Research of the author was partially supported by Grant-in-Aid for JSPS Fellows  (No.\ 18J10420) from Japan Society for the Promotion of Science}

\begin{abstract}
For retracts of the polynomial ring, in \cite{Cos77}, Costa asks us whether every retract of $k^{[n]}$ is also the polynomial ring or not, where $k$ is a field. In this paper, we give an affirmative answer in the case where $k$ is a field of characteristic zero and $n = 3$. 
\end{abstract}
\maketitle

\setcounter{section}{0}

\section{Introduction} 
Let $A$ and $B$ be commutative rings. We say $A$ is a \emph{retract} of $B$ if $A$ is a subring of $B$ and there exists an ideal $I$ of $B$ such that $B \cong A \oplus I$ as $A$-modules. The followings are basic properties of retracts. 
\begin{prop} {\rm (cf.\ \cite[Section 1]{Cos77})} 
\label{prop:Costa}
Let $B$ be an integral domain and let $A$ be a retract of $B$. Then the following assertions hold true. 
	\begin{enumerate}
	  \item
	  $A$ is algebraically closed in $B$. 
	  \item
	  If $B$ is a UFD, then $A$ is also a UFD.
	  \item
	  If $B$ is regular, then $A$ is also regular.  
	\end{enumerate} 
\end{prop}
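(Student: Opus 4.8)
Throughout, write $\rho\colon B\to A$ for the ring retraction attached to the decomposition $B\cong A\oplus I$: since $A\cap I=0$ and $A+I=B$, the composite $A\hookrightarrow B\to B/I$ is an isomorphism, so $\rho\colon B\to B/I\cong A$ is a surjective ring homomorphism with $\rho|_A=\mathrm{id}_A$ and $\ker\rho=I$. Thus $A\cong B/I$ as rings, every $b\in B$ decomposes as $b=\rho(b)+(b-\rho(b))$ with $\rho(b)\in A$ and $b-\rho(b)\in I$, and $A$ is a domain as a subring of the domain $B$. The identity I will use repeatedly is that $\rho(p(b))=p(\rho(b))$ for $p\in A[t]$, because $\rho$ is a ring homomorphism fixing the coefficients.

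For (1), the plan is to show that any $b\in B$ algebraic over $\operatorname{Frac}(A)$ already lies in $A$. First I would clear denominators to get a nonzero $p\in A[t]$ of minimal positive degree $n$ with $p(b)=0$, and set $a:=\rho(b)\in A$. Applying $\rho$ gives $p(a)=\rho(p(b))=0$, so by the factor theorem over the commutative ring $A$ we may write $p(t)=(t-a)q(t)$ with $q\in A[t]$ of degree $n-1$ and the same (nonzero) leading coefficient as $p$; in particular $q\neq 0$. Then $0=p(b)=(b-a)\,q(b)$ in the domain $B$. Minimality of $n$ (or triviality of $q$ when $n=1$) forbids $q(b)=0$, so $b-a=0$, i.e.\ $b=a\in A$. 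Hence $A$ is algebraically closed in $B$.

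For (2) I would use the characterization that a domain is a UFD iff it satisfies the ascending chain condition on principal ideals (ACCP) and every irreducible element is prime. A preliminary consequence of (1) is $A^\times=A\cap B^\times$: if $a\in A\cap B^\times$ then $a^{-1}\in B$ is a root of $at-1\in A[t]$, so $a^{-1}\in A$ by (1). ACCP for $A$ then follows from ACCP in the UFD $B$: given an ascending chain of principal ideals in $A$, extend it to $B$, where it stabilizes, so consecutive generators satisfy $a_n=b\,a_{n+1}$ with $b\in B^\times$; applying $\rho$ gives $a_n=\rho(b)\,a_{n+1}$, whence $(b-\rho(b))a_{n+1}=0$ forces $b=\rho(b)\in A\cap B^\times=A^\times$, so the chain stabilizes in $A$. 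The substantive step, which I expect to be the main obstacle, is that every irreducible $a$ of $A$ is prime in $A$. Here I would descend through $B$: suppose $a\mid xy$ in $A$, set $d:=\gcd_B(a,x)$ and write $a=d\,a_1$, $x=d\,x_1$ with $\gcd_B(a_1,x_1)=1$, so that $a_1\mid y$ in $B$. Applying $\rho$ to $a=d\,a_1$ yields a factorization $a=\rho(d)\,\rho(a_1)$ in $A$; since $a$ is irreducible, one of $\rho(d),\rho(a_1)$ is a unit of $A$. If $\rho(a_1)\in A^\times$ then $a\mid\rho(d)$, and applying $\rho$ to $x=d\,x_1$ gives $a\mid\rho(d)\mid x$; if $\rho(d)\in A^\times$ then $a\mid\rho(a_1)$, and applying $\rho$ to $y=a_1 t$ gives $a\mid\rho(a_1)\mid y$. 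Either way $a\mid x$ or $a\mid y$ in $A$, so $a$ is prime; together with ACCP this proves $A$ is a UFD.

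For (3), the plan is to reduce to the local case and then invoke a homological criterion. Since $A\cong B/I$ is a quotient of the (Noetherian) regular ring $B$, it is Noetherian, so it suffices to prove that every localization $A_{\mathfrak q}$ at a prime is a regular local ring. Setting $P:=\rho^{-1}(\mathfrak q)$, one checks that $\rho$ maps $B\setminus P$ into $A\setminus\mathfrak q$ and that the inclusion maps $A\setminus\mathfrak q$ into $B\setminus P$, so localizing the retraction exhibits $A_{\mathfrak q}$ as a retract of the regular local ring $B_P$. It therefore remains to show that a retract of a regular local ring is regular. I would do this through Serre's theorem that a Noetherian local ring is regular if and only if it has finite global dimension: since $B_P$ has finite global dimension, it is enough to know that the global dimension of a retract does not exceed that of the ambient ring. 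This last homological fact is the crux of (3); I would obtain it from the splitting furnished by the section, which makes the relevant change-of-rings comparison degenerate so that $\operatorname{Tor}$ (equivalently $\operatorname{Ext}$) over $A_{\mathfrak q}$ appears as a direct summand of the corresponding functor over $B_P$, forcing the same vanishing range and hence finite global dimension for $A_{\mathfrak q}$.
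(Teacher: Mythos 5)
The paper itself contains no proof of Proposition \ref{prop:Costa}: it is quoted from Costa \cite{Cos77} with a ``cf.'' citation, so there is nothing internal to compare your argument against; the comparison is with Costa's original treatment, which your proposal essentially reconstructs. Your setup is the right one and matches how the paper itself uses retracts later (in the proof of Lemma \ref{keylemma} the splitting $0 \to I \to B \to A \to 0$ is likewise taken with the projection a ring homomorphism): everything is funneled through the ring retraction $\rho\colon B \to A$ with $\rho|_A = \operatorname{id}$ and $\ker\rho = I$. Your part (1) --- kill the minimal relation $p(b)=0$ by applying $\rho$, factor $p(t)=(t-a)q(t)$, and cancel in the domain $B$ --- is complete and correct, and your part (2) --- ACCP plus ``irreducible implies prime,'' with the $\gcd$ computed in $B$ and pushed down into $A$ by $\rho$ --- is also complete and correct.

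The one place you are only sketching is the crux of (3), and it is worth being precise there, because the neighboring statement ``a module direct summand of a regular ring is regular'' is \emph{false} (invariant rings of finite groups are module retracts of polynomial rings via the Reynolds operator, yet need not be regular); what saves the argument is exactly that your splitting is by a \emph{ring} homomorphism. The clean way to finish is functoriality of the change-of-rings comparison maps on $\operatorname{Tor}$: for the ring maps $A_{\mathfrak q} \xrightarrow{\iota} B_P \xrightarrow{\rho} A_{\mathfrak q}$ with $\rho\circ\iota = \operatorname{id}$, and $A_{\mathfrak q}$-modules $M,N$ viewed as $B_P$-modules via $\rho$, the composite $\operatorname{Tor}^{A_{\mathfrak q}}_i(M,N) \to \operatorname{Tor}^{B_P}_i(M,N) \to \operatorname{Tor}^{A_{\mathfrak q}}_i(M,N)$ of the comparison maps for $\iota$ and for $\rho$ is the comparison map for $\rho\circ\iota = \operatorname{id}$, hence the identity; so $\operatorname{Tor}^{A_{\mathfrak q}}_i(M,N)$ is a retract of $\operatorname{Tor}^{B_P}_i(M,N)$ and vanishes whenever the latter does. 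Moreover you only need this for $M = N = \kappa(\mathfrak q)$: since $\rho_P^{-1}(\mathfrak q A_{\mathfrak q}) = P B_P$, the residue field $\kappa(\mathfrak q)$ viewed via $\rho_P$ is just $\kappa(P)$, and $\operatorname{Tor}^{B_P}_i(\kappa(P),\kappa(P)) = 0$ for $i > \dim B_P$ by regularity of $B_P$; thus $\operatorname{pd}_{A_{\mathfrak q}}\kappa(\mathfrak q) < \infty$ and Serre's theorem gives regularity of $A_{\mathfrak q}$. With that paragraph inserted, your proof of (3) is complete as well.
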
  
\begin{lem} \label{lemma:extension}
Let $k$ be a field. Let $A$ and $B$ be $k$-algebras. If $A$ is a retract of $B$, then $A \otimes_kK$ is a retract of $B \otimes_kK$ for any field $K$ containing $k$. 
\end{lem}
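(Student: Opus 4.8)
The plan is to exhibit the retract structure as a pair of $k$-algebra homomorphisms and then to base-change everything along $k\to K$ by means of the functor $-\otimes_k K$. First I would record the standard reformulation of the hypothesis. Writing $\iota\colon A\hookrightarrow B$ for the inclusion and letting $\rho\colon B\to A$ be the projection attached to the decomposition $B=A\oplus I$, I would check that $\rho$ is not merely $A$-linear but an actual $k$-algebra homomorphism with $\rho\circ\iota=\mathrm{id}_A$ and $\Ker\rho=I$. The only nontrivial point here is multiplicativity: for $b_i=a_i+x_i$ with $a_i\in A$ and $x_i\in I$ one has $b_1b_2=a_1a_2+(a_1x_2+a_2x_1+x_1x_2)$, and the parenthesised terms lie in $I$ precisely because $I$ is an ideal of $B$; hence $\rho(b_1b_2)=a_1a_2=\rho(b_1)\rho(b_2)$. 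Thus ``$A$ is a retract of $B$'' is equivalent to the existence of $k$-algebra maps $\iota,\rho$ with $\rho\circ\iota=\mathrm{id}_A$.

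Next I would apply the functor $-\otimes_k K$, which sends $k$-algebra homomorphisms to $K$-algebra homomorphisms and preserves composition. Setting $\iota_K=\iota\otimes\mathrm{id}_K$ and $\rho_K=\rho\otimes\mathrm{id}_K$, functoriality yields
\[
\rho_K\circ\iota_K=(\rho\circ\iota)\otimes\mathrm{id}_K=\mathrm{id}_{A\otimes_k K}.
\]
Since $\iota_K$ has a left inverse it is injective, so $A\otimes_k K$ is a subring of $B\otimes_k K$; and $I':=\Ker\rho_K$ is an ideal of $B\otimes_k K$, being the kernel of a ring homomorphism. The usual splitting argument, now applied to $\iota_K,\rho_K$, gives $B\otimes_k K=\iota_K(A\otimes_k K)\oplus I'$ as $(A\otimes_k K)$-modules via $x=\iota_K(\rho_K(x))+\bigl(x-\iota_K(\rho_K(x))\bigr)$, which is exactly the assertion that $A\otimes_k K$ is a retract of $B\otimes_k K$.

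I do not expect a serious obstacle: the content is entirely in the first paragraph, namely recognising that the ideal hypothesis makes the projection multiplicative, after which the statement is a formal consequence of functoriality of base change. In particular the argument never uses that $K$ is a field, only that $k\to K$ is a ring homomorphism; the field hypothesis is convenient but not essential. If one prefers to argue directly with the decomposition, one tensors the split short exact sequence $0\to I\to B\to A\to 0$ of $A$-modules with $K$ over $k$; because it is split, exactness is preserved by any additive functor, so no flatness of $K$ over $k$ need be invoked, and the image of $I\otimes_k K$ is the desired ideal.
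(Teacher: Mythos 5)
Your proof is correct, and at bottom it makes the same move as the paper --- apply the base-change functor $-\otimes_k K$ --- but the route through it is genuinely different, and the difference is instructive. The paper's proof is two lines: tensor the $A$-module decomposition $B \cong A \oplus I$ to get $B\otimes_kK \cong (A\otimes_kK) \oplus I'$ with $I' := I\otimes_kK$, and assert that $I'$ is an ideal of $B\otimes_kK$. That argument leaves two points implicit: that $A\otimes_kK \to B\otimes_kK$ is injective (needed for $A\otimes_kK$ to be a \emph{subring} of $B\otimes_kK$, as the definition of retract demands), and that $I'$, a priori an abstract tensor product, really sits inside $B\otimes_kK$ as an ideal. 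Your reformulation via the pair $(\iota,\rho)$ with $\rho\circ\iota = \mathrm{id}_A$ renders both points automatic: $\iota_K$ is injective because it has a left inverse, and $I' = \Ker\rho_K$ is an ideal because kernels of ring homomorphisms are ideals. The cost is your first paragraph --- checking that the projection attached to $B = A\oplus I$ is multiplicative --- but that fact is not idle: the paper itself uses it without comment in the proof of Lemma \ref{keylemma}, where the retraction $\varphi\colon B\to A$ is taken to be a surjective $k$-algebra homomorphism, so your verification supplies a detail the paper elides. Your closing observation is also correct: the argument never uses that $K$ is a field, nor any flatness of $K$ over $k$, since split exact sequences (equivalently, the identity $\rho\circ\iota=\mathrm{id}_A$) are preserved by every additive functor; the paper states the lemma only for field extensions because that is all Theorem \ref{thm:main} needs, where $K$ is an algebraic closure of $k$.
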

\begin{proof}
Since $A$ is a retract of $B$, there exists an ideal $I$ of $B$ such that $B \cong A \oplus I$ as $A$-modules. Let $K$ be a field containing $k$. Taking a tensor product by $K$ over $k$, we have $B \otimes_kK \cong (A \otimes_kK) \oplus I'$ as $A \otimes_kK$-modules, where $I' := I \otimes_kK$ is an ideal of $B \otimes_kK$. Thus, $A \otimes_kK$ is a retract of $B \otimes_kK$. 
\end{proof}
Let $k$ be a field. We denote $k^{[n]}$ by the polynomial ring in $n$ variables over $k$. In \cite{Cos77}, Costa asks us the following question. 
\begin{question} \label{question} 
Let $k$ be a field and let $B := k^{[n]}$. Then, is every retract of $B$ containing $k$ the polynomial ring?   
\end{question}
If $n \leq 2$, then the above question is affirmative and proved by Costa (\cite[Theorem 3.5]{Cos77}). On the other hand, it is well known that Question \ref{question} is related to Zariski's cancellation problem as below. 
\begin{prop}
If Question \ref{question} holds for $n + 1$, then Zariski's cancellation problem has an affirmative answer for $\A^{n}$, that is, $X \times \A^1 \cong \A^{n + 1}$ implies $X \cong \A^{n}$. 
\end{prop}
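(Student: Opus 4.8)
The plan is to exhibit the coordinate ring of $X$ as a retract of $k^{[n+1]}$ and then invoke the hypothesis. Write $X = \Spec R$ for a $k$-algebra $R$; the assumption $X \times \A^1 \cong \A^{n+1}$ translates into an isomorphism of $k$-algebras $R[t] \cong k^{[n+1]}$, where $t$ is an indeterminate, so that $R^{[1]} \cong k^{[n+1]}$. In particular $R$ is an integral domain (being a subring of the domain $k^{[n+1]}$), it is nonzero, and since $k$ is a field the structure map $k \to R$ is injective; thus $R$ contains $k$. Using the composite $R \hookrightarrow R[t] \xrightarrow{\sim} k^{[n+1]}$, I regard $R$ as a $k$-subalgebra of $k^{[n+1]}$.

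Next I would produce the retraction. The evaluation homomorphism $\varepsilon \colon R[t] \to R$, $f \mapsto f(0)$, is a surjective $k$-algebra map whose restriction to $R$ is the identity, and its kernel is the ideal $I := tR[t]$. Since every polynomial decomposes uniquely as its constant term plus a multiple of $t$, we obtain $R[t] = R \oplus I$ as $R$-modules. Hence $R$ is a retract of $R[t] \cong k^{[n+1]}$ and contains $k$.

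Now I apply the assumption that Question \ref{question} holds for $n+1$: every retract of $k^{[n+1]}$ containing $k$ is a polynomial ring, so $R \cong k^{[m]}$ for some $m \geq 0$. It remains to identify $m$. From $R \cong k^{[m]}$ we get $k^{[m+1]} \cong R[t] \cong k^{[n+1]}$, and comparing Krull dimensions (equivalently, transcendence degrees over $k$) forces $m + 1 = n + 1$, i.e.\ $m = n$. Therefore $R \cong k^{[n]}$, which is exactly $X \cong \A^n$.

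The argument is largely a matter of packaging, so I do not anticipate a deep obstacle once the retraction $\varepsilon$ is spotted. The single step that genuinely uses content, rather than a formal manipulation, is the passage from ``$R$ is a retract of $k^{[n+1]}$'' to ``$R$ is a polynomial ring,'' which is precisely the hypothesis; the only care needed afterwards is to note that cancellation for honest polynomial rings is automatic via the dimension count, and this is what lets me read off $m = n$.
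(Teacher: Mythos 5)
Your proof is correct and follows essentially the same route as the paper: realize the coordinate ring as a retract of $k^{[n+1]}$ via the splitting $R[t] = R \oplus tR[t]$, invoke the hypothesis, and pin down the number of variables by a transcendence-degree (dimension) count. You merely make explicit two steps the paper leaves as ``clear,'' namely the evaluation-at-zero retraction and the identification $m = n$.
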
  
\begin{proof}
Let $k$ be afield. Suppose that Question \ref{question} holds for $n + 1$. Let $X$ be an affine variety over $k$ such that $X \times \A^1_k \cong \A^{n + 1}_k$ and let $A$ be the coordinate ring of $X$. Then $A^{[1]} \cong k^{[n + 1]}$ and $\trdeg_kA = n$. It is clear that $A$ is a retract of $k^{[n + 1]}$. Therefore $A \cong k^{[n]}$, which implies that $X \cong \A^{n}$. 
\end{proof}
When $k$ is a field of positive characteristic, Gupta \cite{Gup14} proved that Zariski's cancellation problem does not hold for $\A^n$ if $n \geq 3$. Therefore Question \ref{question} does not hold in the case where $k$ is a field of positive characteristic and $n \geq 4$. So, the remaining cases are  
	\begin{itemize}
	  \item
	  the characteristic of $k$ is positive and $n = 3$, 
	  \item
	  the characteristic of $k$ is zero and $n \geq 3$. 
	\end{itemize}

In this paper, we consider the case where $k$ is a field of characteristic zero and $n = 3$. The main result in this paper is to give an affirmative answer for Question \ref{question} in this case.
\section{Main results}
Let $k$ be an algebraically closed field and let $X$ be a (not necessarily complete) nonsingular algebraic variety over $k$. By virtue of Nagata's Completion Theorem (\cite{Nag62}), there exists a complete algebraic variety $\overline{X}$ over $k$ such that $X$ is open and dense in $\overline{X}$. We say that $X$ is a \emph{resolvable variety} if Hironaka's Main Theorems about resolution of singularities hold for $\overline{X}$ and $\overline{X} - X$. For a resolvable variety $X$, we denote $\bar{\kappa} (X)$ by the \emph{logarithmic Kodaira dimension}. 
\begin{lem} {\rm (cf.\ \cite[Theorem 1.1 (a)]{Kam80})} 
\label{lem:1} 
Let $f: X \to Y$ be a morphism of nonsingular, resolvable algebraic varieties over an algebraically closed field. If $f$ is dominant and generically separable, then $\bar{\kappa} (X) \geq \bar{\kappa} (Y)$.  
\end{lem}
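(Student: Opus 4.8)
The plan is to deduce the inequality from the behaviour of logarithmic pluricanonical forms under pullback, via a logarithmic ramification formula. I read the hypothesis as in Kambayashi's setting, where $f$ is generically finite with $k(X)/k(Y)$ a finite separable extension, so that I may write $n := \dim X = \dim Y$. First I would fix the data computing the two logarithmic Kodaira dimensions. Since $X$ and $Y$ are resolvable, I can choose nonsingular complete models $\overline{X} \supseteq X$ and $\overline{Y} \supseteq Y$ whose boundaries $D_X := \overline{X} - X$ and $D_Y := \overline{Y} - Y$ are reduced simple normal crossing divisors, so that $\bar{\kappa}(X) = \kappa(\overline{X}, K_{\overline{X}} + D_X)$ and $\bar{\kappa}(Y) = \kappa(\overline{Y}, K_{\overline{Y}} + D_Y)$, independently of the choices. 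Applying Hironaka's theorems to the rational map $\overline{X} \dashrightarrow \overline{Y}$ induced by $f$, I would blow up $\overline{X}$ until this map becomes a genuine morphism $\overline{f} : \overline{X} \to \overline{Y}$ extending $f$; since $f(X) \subseteq Y$, the model can be arranged so that $\overline{f}^{-1}(D_Y) \subseteq \Supp D_X$.

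The heart of the argument is a pullback map on logarithmic forms. Because $\overline{f}$ carries the complement of $D_X$ into the complement of $D_Y$ and both boundaries are simple normal crossing, there is a natural $\mathcal{O}_{\overline{X}}$-linear homomorphism $\overline{f}^* \Omega^1_{\overline{Y}}(\log D_Y) \to \Omega^1_{\overline{X}}(\log D_X)$: in local logarithmic coordinates a generator $d(\log y_i)$ pulls back to a form with at worst logarithmic poles along $D_X$. Taking top exterior powers (here I use $\dim X = \dim Y = n$) and recalling $\Omega^n_{\overline{X}}(\log D_X) \cong \mathcal{O}_{\overline{X}}(K_{\overline{X}} + D_X)$, I obtain a homomorphism of invertible sheaves
\begin{equation*}
\overline{f}^* \mathcal{O}_{\overline{Y}}(K_{\overline{Y}} + D_Y) \longrightarrow \mathcal{O}_{\overline{X}}(K_{\overline{X}} + D_X),
\end{equation*}
which is nonzero precisely because $\overline{f}$ is dominant and generically separable, hence generically \'etale, so the pulled-back generating top form does not vanish identically. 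Being a nonzero map of line bundles on an integral variety it is injective, and this is exactly a logarithmic ramification formula $K_{\overline{X}} + D_X = \overline{f}^*(K_{\overline{Y}} + D_Y) + R$ with $R$ an effective divisor.

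To finish, I would compare plurigenera. For every $m \geq 1$, raising the above map to its $m$-th tensor power and using $R \geq 0$ gives inclusions of invertible sheaves $\overline{f}^*\mathcal{O}_{\overline{Y}}(m(K_{\overline{Y}} + D_Y)) \hookrightarrow \mathcal{O}_{\overline{X}}(m(K_{\overline{X}} + D_X))$, whence a chain of linear maps on global sections
\begin{equation*}
H^0\bigl(\overline{Y}, m(K_{\overline{Y}} + D_Y)\bigr) \longrightarrow H^0\bigl(\overline{X}, \overline{f}^* m(K_{\overline{Y}} + D_Y)\bigr) \longrightarrow H^0\bigl(\overline{X}, m(K_{\overline{X}} + D_X)\bigr).
\end{equation*}
The first map is injective because $\overline{f}$ is dominant (so $\overline{f}^* s = 0$ forces $s = 0$), and the second is injective because $R \geq 0$. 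Thus the logarithmic plurigenera of $X$ dominate those of $Y$ for every $m$, and passing to the growth rate in $m$ yields $\bar{\kappa}(X) \geq \bar{\kappa}(Y)$.

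I expect the main obstacle to be the construction and control of the pullback map in the second paragraph: one must verify that the pulled-back logarithmic forms acquire no worse than logarithmic poles along every component of $D_X$ (including the exceptional divisors introduced when resolving $\overline{f}$), and that the resulting ramification divisor $R$ is genuinely effective. This is the point where generic separability is indispensable, since in its absence the sheaf map in the first display can degenerate to zero and the comparison of plurigenera breaks down; it is also the reason the inequality is restricted to the generically finite case, as the corresponding statement fails once $\dim X > \dim Y$.
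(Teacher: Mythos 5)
The paper never proves this lemma: it is quoted from Kambayashi \cite{Kam80} (Theorem 1.1(a)), which in turn rests on Iitaka's theory of logarithmic Kodaira dimension, so there is no in-paper proof to compare yours against and it must stand on its own. As a proof of the \emph{generically finite} case, your argument is the standard one and essentially correct: extend $f$ to $\ol{f} : \ol{X} \to \ol{Y}$ by resolving indeterminacy (note that $\ol{f}^{-1}(D_Y) \subseteq \Supp D_X$ needs no special arrangement --- it is automatic, since $\ol{f}$ agrees with $f$ on $X$ and $f(X) \subseteq Y$), pull back logarithmic forms, use generic separability to see that the induced map of line bundles $\ol{f}^{\,*}\mathcal{O}_{\ol{Y}}(K_{\ol{Y}}+D_Y) \to \mathcal{O}_{\ol{X}}(K_{\ol{X}}+D_X)$ is nonzero, hence injective, and compare log plurigenera. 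The details you flag as delicate are the standard ones and they do go through.

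The genuine gap is that you have proved a different statement from the one the paper states and, more importantly, uses. You inserted the hypothesis $\dim X = \dim Y$, and your entire mechanism --- the line-bundle map, the ramification divisor $R$, the plurigenera comparison --- exists only under it. But the lemma as printed carries no such hypothesis, and its sole application in the paper, in the proof of Lemma \ref{keylemma}, is to the morphism with source $\A^n_k = \Spec B$ and target $\Spec A$, where $\trdeg_k A$ is strictly smaller than $n$ (in Theorem \ref{thm:main}, $n=3$ and $\trdeg_k A = 2$); so your statement cannot be substituted into the paper. Your closing observation that the inequality fails when $\dim X > \dim Y$ is correct --- the projection $\BP^1 \times C \to C$ with $g(C) \geq 2$ satisfies every printed hypothesis (both varieties nonsingular, complete, hence resolvable; the map dominant and separable) yet has $\bar{\kappa}(\BP^1 \times C) = -\infty$ and $\bar{\kappa}(C)=1$ --- and it shows the lemma as literally printed is false; but precisely for that reason, retreating to the equidimensional case is not an admissible reading, since it makes the statement true and simultaneously useless here. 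The version that is both true and strong enough is Iitaka's refined pullback statement: for $f$ dominant and generically separable, separability makes $\Omega^1_{k(Y)/k}\otimes_{k(Y)}k(X) \to \Omega^1_{k(X)/k}$ injective, so with $d = \dim Y$ one gets a sheaf injection $\ol{f}^{\,*}\Omega^d_{\ol{Y}}(\log D_Y) \hookrightarrow \Omega^d_{\ol{X}}(\log D_X)$ and hence injections $H^0(\ol{Y}, m(K_{\ol{Y}}+D_Y)) \hookrightarrow H^0(\ol{X}, (\Omega^d_{\ol{X}}(\log D_X))^{\otimes m})$ for all $m \geq 1$. This no longer bounds $\bar{\kappa}(Y)$ by $\bar{\kappa}(X)$ in general (it cannot, by the ruled-surface example), but it forces $\bar{\kappa}(Y) = -\infty$ whenever the right-hand spaces vanish; they are birational invariants of the open variety $X$ and vanish for $X = \A^n$ because $\Omega^1_{\BP^n}(\log H) \cong \mathcal{O}_{\BP^n}(-1)^{\oplus n}$, so each $(\Omega^d(\log H))^{\otimes m}$ is a sum of copies of $\mathcal{O}_{\BP^n}(-dm)$. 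That vanishing is exactly what Lemma \ref{keylemma} needs. (Alternatively, in characteristic zero one can reduce to your equidimensional statement by restricting $f$ to a general linear subspace $\A^d \subseteq \A^n$, which dominates the target generically finitely; but one of these supplements is indispensable before your proof can play the role of Lemma \ref{lem:1} in this paper.)
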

The following is a characterization for affine planes (see Miyanishi \cite{Miy75}, Fujita \cite{Fuj79}, Miyanishi--Sugie \cite{MS80} and Russell \cite{Rus81}).  
\begin{thm} \label{thm:MSR} 
Let $k$ be an algebraically closed field and $X$ be a nonsingular affine surface over $k$. Let $A$ be the coordinate ring of $X$, namely $X = \Spec A$. Then $X \cong \A^2_k$ if and only if $A^* = k^*$, $A$ is a UFD and $\bar{\kappa} (X) = - \infty$. 
\end{thm}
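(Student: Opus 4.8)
The plan is to prove the two implications separately, the forward one being routine and the converse being the substance. For the forward direction, if $X \cong \A^2_k$ then $A = k^{[2]}$, so $A^* = k^*$ and that $A$ is a UFD are immediate. To see $\bar{\kappa}(X) = -\infty$ I would use the smooth completion $\A^2_k \subset \BP^2_k$ whose boundary is a line $L$; then $K_{\BP^2} + L \sim -2L$, and since $-2mL$ has no global sections for $m \geq 1$, no logarithmic pluricanonical form exists and $\bar{\kappa}(\A^2_k) = -\infty$.

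For the converse, assume $A^* = k^*$, that $A$ is a UFD, and that $\bar{\kappa}(X) = -\infty$. The strategy has four steps: (i) produce an $\A^1$-fibration $p : X \to C$ onto a smooth affine curve whose general fibre is $\A^1$; (ii) show $C \cong \A^1_k$; (iii) show every fibre of $p$ is a reduced irreducible copy of $\A^1$, so that $p$ is an $\A^1$-bundle; (iv) trivialise the bundle. Steps (ii)--(iv) are comparatively formal, so I would concentrate on (i).

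Step (i) is the main obstacle and the geometric heart of the theorem. I would fix a smooth completion $(\bar{X}, D)$ with $D = \bar{X} - X$ a simple normal crossing divisor, so that $\bar{\kappa}(X) = -\infty$ says $K_{\bar{X}} + D$ is not pseudo-effective. Running the logarithmic classification of the pair $(\bar{X}, D)$ --- the core of the Miyanishi--Sugie analysis --- one extracts a pencil of rational curves on $\bar{X}$ meeting $D$ in the right way, whose restriction to $X$ is an $\A^1$-fibration $p : X \to C$. Lemma \ref{lem:1} enters here as the monotonicity that keeps $\bar{\kappa} = -\infty$ under the birational and generically finite separable morphisms appearing in the classification, but the existence of the fibration itself rests on the structure theory of open surfaces with $\bar{\kappa} = -\infty$, which is where essentially all the difficulty lies.

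Once $p : X \to C$ is in hand, the two algebraic hypotheses pin it down. Factoriality gives $\operatorname{Pic}(X) = 0$; since $p$ has connected generic fibre $\A^1$, the pullback $p^* : \operatorname{Pic}(C) \to \operatorname{Pic}(X)$ is injective by the projection formula, so $\operatorname{Pic}(C) = 0$, which forces the smooth completion of $C$ to have genus $0$. The unit condition $\mathcal{O}(C)^* \subseteq A^* = k^*$ then forces $C$ to have a single point at infinity, whence $C \cong \A^1_k$. For step (iii), writing each reduced fibre component as a principal divisor $\dg(f_i)$ (possible because $A$ is a UFD) and comparing with the triviality of $A^*$ rules out both reducible and multiple fibres, so all fibres of $p$ are reduced irreducible copies of $\A^1$ and $p$ is a Zariski-locally trivial $\A^1$-bundle. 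Finally an $\A^1$-bundle over $\A^1_k$ is trivial, since $\operatorname{Pic}(\A^1_k) = 0$ and $H^1(\A^1_k, \mathcal{O}) = 0$, giving $X \cong \A^1_k \times \A^1_k \cong \A^2_k$ and finishing the converse.
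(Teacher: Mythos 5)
The first thing to note is that the paper does not prove this statement at all: Theorem \ref{thm:MSR} is imported as a known result, with the pointer ``see Miyanishi \cite{Miy75}, Fujita \cite{Fuj79}, Miyanishi--Sugie \cite{MS80} and Russell \cite{Rus81}.'' So your proposal is being compared against the literature, not against an argument in the paper. Measured that way, your outline is a faithful pr\'ecis of the standard proof: the forward direction via $K_{\BP^2} + L \sim -2L$ is correct, and steps (ii)--(iv) are essentially the right formal skeleton. Two small refinements there: injectivity of $p^* : \operatorname{Pic}(C) \to \operatorname{Pic}(X)$ is cleaner via functions than via the projection formula (if $p^*D = \dg(f)$, then $f$ is a unit on the generic fibre $\A^1_{k(t)}$, hence lies in $k(t)$, so $D$ is already principal on $C$); and in step (iii) the efficient argument is that each prime vertical component $F_i = \dg(f_i)$ has $f_i$ a unit on the generic fibre, hence $f_i \in k(t)$, so $\dg(f_i)$ is an integral combination of \emph{full} fibres --- primality then forces $F_i$ to be an entire fibre, reduced, killing reducible and multiple fibres in one stroke. (Also, Lemma \ref{lem:1} does not enter the proof of this theorem; in the paper it is used only in Lemma \ref{keylemma}.)

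The genuine gap is the one you half-concede: step (i) \emph{is} the theorem. Producing an $\A^1$-fibration on a smooth affine surface from $\bar{\kappa}(X) = -\infty$ is precisely the content of Miyanishi--Sugie's work (equivalently, the existence of a cylinderlike open set $U \cong C \times \A^1$), and ``running the logarithmic classification of the pair $(\ol{X}, D)$'' names the machine without building it --- the peeling of the boundary divisor, the analysis of $(K_{\ol{X}}+D)$-negative extremal rational curves, and the verification that the resulting pencil restricts to an $\A^1$-fibration on $X$ are the entire difficulty, and none of it is supplied. Moreover, your framing via non-pseudo-effectivity of $K_{\ol{X}} + D$ and log classification is intrinsically a characteristic-zero argument, whereas the statement is asserted over an arbitrary algebraically closed field; the positive-characteristic case requires Russell's separate treatment \cite{Rus81} (and the fibre-structure and local-triviality facts you use in steps (iii)--(iv) also need care in characteristic $p$). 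So as a blind proof the proposal is an accurate roadmap with correct formal steps, but its core is cited rather than proved --- which, to be fair, is exactly how the paper itself treats the theorem.
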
 
First of all, we shall show some properties of retracts of the polynomial ring over a field. 
\begin{lem}
\label{keylemma}
Let $k$ be an algebraically closed field and let $B := k^{[n]}$ be the polynomial ring in $n$ variables over $k$. Let $A$ be a retract of $B$ containing $k$ and set $X = \Spec A$. If $X$ is resolvable and $Q(B)$ is separably generated over $Q(A)$, then the following assertions hold true.
	\begin{enumerate}
	  \item
	  $A$ is a finitely generated UFD over $k$ with $A^* = k^*$,  
	  \item
	  $X$ is a nonsingular variety over $k$,  
	  \item
	  $\bar{\kappa} (X) = - \infty$,  
	\end{enumerate}
where we denote $Q(R)$ by the quotient field of an integral domain $R$. 
\end{lem}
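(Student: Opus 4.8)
The plan is to treat assertions (1) and (2) as fairly direct consequences of the structural properties of retracts recorded in Proposition \ref{prop:Costa}, and to concentrate the real work on (3), where the logarithmic Kodaira dimension enters through Lemma \ref{lem:1}.

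For (1), I would first note that the $A$-module decomposition $B \cong A \oplus I$ furnishes a retraction $\pi : B \to A$, namely the projection killing $I$, and that $\pi$ is a surjective $k$-algebra homomorphism since it restricts to the identity on $A \subseteq B$. Because $B = k^{[n]}$ is finitely generated over $k$, its homomorphic image $A = \pi(B)$ is finitely generated over $k$ as well. The UFD property is then immediate from Proposition \ref{prop:Costa}(2), as $k^{[n]}$ is a UFD. For the units, the inclusion $A \hookrightarrow B$ together with $B^* = k^*$ gives $k^* \subseteq A^* \subseteq B^* = k^*$, so $A^* = k^*$. For (2), Proposition \ref{prop:Costa}(3) yields that $A$ is regular, since $k^{[n]}$ is; and as $A$ is finitely generated over the algebraically closed (hence perfect) field $k$, regularity coincides with smoothness, so $X = \Spec A$ is nonsingular.

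The heart of the argument is (3), and the key idea is to exploit the inclusion $\iota : A \hookrightarrow B$ rather than the retraction. The injection $\iota$ corresponds to a dominant morphism $f : \A^n = \Spec B \to \Spec A = X$, and the hypothesis that $Q(B)$ is separably generated over $Q(A)$ is precisely what makes $f$ generically separable, since the function-field extension induced by $f$ is exactly $Q(B)/Q(A)$. Since both $\A^n$ and $X$ are nonsingular and resolvable ($\A^n$ trivially, via the compactification $\BP^n$ with a hyperplane as boundary; $X$ by hypothesis), Lemma \ref{lem:1} applies and gives $\bar{\kappa}(\A^n) \geq \bar{\kappa}(X)$. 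Finally, $\bar{\kappa}(\A^n) = -\infty$ is a standard computation from $(\BP^n, H)$, whose log canonical bundle and all its powers have no sections; and as $-\infty$ is the least possible value, we conclude $\bar{\kappa}(X) = -\infty$.

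The main point requiring care is getting the direction of the morphism right in (3): one must use the structural inclusion $A \hookrightarrow B$, giving $f : \A^n \to X$, and not the retraction $\pi$, which only yields a closed immersion $X \hookrightarrow \A^n$. This is because Lemma \ref{lem:1} bounds the Kodaira dimension of the source below by that of the target, and we wish to transfer the extremal value $-\infty$ from $\A^n$ down onto $X$. The remaining details are the verification that generic separability of $f$ follows from the separable-generation hypothesis, and the confirmation that the nonsingularity and resolvability hypotheses of Lemma \ref{lem:1} are met by both varieties.
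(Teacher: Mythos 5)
Your proposal is correct and follows essentially the same route as the paper: finite generation via the retraction surjection, the UFD and regularity properties from Proposition \ref{prop:Costa}, and the inequality $\bar{\kappa}(X) \leq \bar{\kappa}(\A^n_k) = -\infty$ obtained by applying Lemma \ref{lem:1} to the dominant, generically separable morphism $f : \A^n_k \to X$ induced by the inclusion $A \hookrightarrow B$. Your added remarks (why the projection along $I$ is a ring homomorphism, and why $\A^n_k$ is resolvable via $\BP^n$) are correct elaborations of points the paper leaves implicit.
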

\begin{proof} 
Since $A$ is a retract of $B$, $A$ is a $k$-subalgebra of $B$. Hence it is clear that $A$ is an integral domain with $A^* = k^*$. Furthermore, there exists a surjective homomorphism as $k$-algebras $\varphi : B \to A$ such that the following sequence of $A$-modules is exact and split: 
	\begin{center}
	  $0 \xrightarrow{} I \xrightarrow{} B \xrightarrow{\varphi}  A \xrightarrow{} 0$, 
	\end{center} 
where $I := \ker f$. Hence $A$ is finitely generated as a $k$-algebra. Also by Proposition \ref{prop:Costa} (2), we see that $A$ is a UFD.  

We consider a morphism $f : \A^n_k \cong  \Spec B \to X$ defined by a natural inclusion map $\iota : A \to B$. It follows from Proposition \ref{prop:Costa} (3) that $X$ is nonsingular over $k$. 

Suppose that $X$ is resolvable and $Q(B)$ is separably generated over $Q(A)$. Then $f$ is dominant and generically separable. Hence by Lemma \ref{lem:1}, we have $\bar{\kappa} (X) \leq \bar{\kappa} (\A^n_k) =  - \infty$, which implies that $\bar{\kappa} (X) =  - \infty$. 
\end{proof}

When we consider the polynomial ring in two variables whose ground field is not necessarily algebraically closed, the following result is useful. 
\begin{thm} {\rm (cf.\ \cite{Kam75} or \cite[Theorem 5.2]{Fre17})}
\label{thm:Kambayashi}
Let $K$ and $k$ be fields such that $K$ is separably generated over $k$. Suppose that $A$ is a commutative $k$-algebra for which $K\otimes_kA \cong K^{[2]}$. Then $A \cong k^{[2]}$ . 
\end{thm}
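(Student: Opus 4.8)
The plan is to prove the statement by Galois descent, reducing the triviality of the form over $K$ down to the base field $k$ through the factorization of a separably generated extension. First I would reduce to the case that $K/k$ is finitely generated: a standard spreading-out argument shows that any isomorphism $\Phi\colon K\otimes_kA\xrightarrow{\sim}K^{[2]}$ is already defined over a finitely generated subextension $k\subseteq K_0\subseteq K$, so that $K_0\otimes_kA\cong K_0^{[2]}$; and since subextensions of separable extensions are separable, $K_0/k$ is separable and finitely generated, hence separably generated. I may therefore assume $K/k$ is of finite type and separably generated, so that it factors as $K\supseteq F:=k(t_1,\dots,t_n)\supseteq k$ with $F/k$ purely transcendental and $K/F$ finite separable. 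Replacing $K$ by the Galois closure of $K/F$ (still separable over $k$ by transitivity of separability, and still splitting $A$) I may assume $K/F$ is finite Galois with group $G$.

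The heart of the matter is the separable algebraic descent. Setting $B:=F\otimes_kA$, the hypothesis gives $K\otimes_FB=K\otimes_kA\cong K^{[2]}$, so $B$ is a form of the affine plane $\A^2_F$ trivialized by the finite Galois extension $K/F$. Such forms are classified by the pointed cohomology set $H^1\!\big(G,\operatorname{Aut}_{K\text{-alg}}(K^{[2]})\big)$, and everything hinges on showing this set is trivial. Here I would invoke the Jung--van der Kulk theorem, which exhibits $\operatorname{Aut}_K(K[X,Y])$ as the amalgamated free product of the affine group and the de Jonqui\`eres (triangular) group over their intersection; the Bass--Serre theory of groups acting on the associated tree then forces every $1$-cocycle of $G$ into this amalgam to be a coboundary, whence $H^1=\{\ast\}$ and $B\cong F^{[2]}$. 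I expect this cohomological vanishing---which is exactly Kambayashi's theorem on the absence of nontrivial separable forms of the plane---to be by far the main obstacle, since it is the one place where the special two-dimensional geometry (the amalgam structure of $\operatorname{Aut}\A^2$) is indispensable and cannot be replaced by soft descent.

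It then remains to carry out the purely transcendental descent from $F\otimes_kA\cong F^{[2]}$, with $F=k(t_1,\dots,t_n)$, down to $A\cong k^{[2]}$. By induction this reduces to $F=k(t)$: spreading the isomorphism over a localization $k[t]_g$ gives $A\otimes_kk[t]_g\cong(k[t]_g)^{[2]}$, and specializing $t$ at a value lying in a finite separable extension $k'/k$ and avoiding $g$ yields $A\otimes_kk'\cong (k')^{[2]}$; a further application of the separable algebraic step of the previous paragraph to $k'/k$ then gives $A\cong k^{[2]}$. One may take $k'=k$ whenever $k$ is infinite, the detour through $k'$ being needed only to produce a rational specialization point when $k$ is finite.

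Finally, I would record the geometric input that makes $A$ manifestly a form of $\A^2$ and that connects the argument to Theorem \ref{thm:MSR}: faithfully flat descent along $A\hookrightarrow K\otimes_kA=K^{[2]}$ shows that $A$ is a finitely generated smooth $k$-domain of dimension two with $A^*=k^*$ and trivial divisor class group, and, writing $\bar{k}$ and $\bar{K}$ for algebraic closures of $k$ and $K$, the invariance of the logarithmic Kodaira dimension under extension of the algebraically closed base field gives $\bar\kappa\big(\Spec(\bar{k}\otimes_kA)\big)=\bar\kappa(\A^2_{\bar{K}})=-\infty$, since $\bar{K}\otimes_kA\cong\bar{K}^{[2]}$. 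Theorem \ref{thm:MSR} then already yields $\bar{k}\otimes_kA\cong\bar{k}^{[2]}$, confirming that $A$ is genuinely a twisted form of the plane and isolating the descent of its triviality over $k$---accomplished by the two steps above---as the real content of the theorem.
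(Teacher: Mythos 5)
The paper itself gives no proof of this statement: it is quoted as a known result, with the argument delegated to \cite{Kam75} (or \cite[Theorem 5.2]{Fre17}). So your proposal has to be measured against the literature proof, and its architecture does reconstruct that proof faithfully: reduce to a finitely generated (hence separably generated) subextension by spreading out; factor $K/k$ into a purely transcendental part $F/k$ and a finite separable part $K/F$, passing to a Galois closure; treat $K/F$ by Galois descent, where the forms of $\A^2_F$ split by $K$ are classified by $H^1\big(G,\operatorname{Aut}_K(K^{[2]})\big)$ and the Jung--van der Kulk amalgam is the decisive input; and treat $F/k$ by clearing denominators and specializing, with the detour through a finite separable extension $k'$ when $k$ is finite. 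These reductions are all sound; in particular the specialization step, base-changing an isomorphism over $k[t]_g$ along an evaluation $k[t]_g \to k'$ with $g(c)\neq 0$, is legitimate, and the finite-field case is correctly routed back through the algebraic-descent step.

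The one place where your sketch asserts more than the invoked tool delivers is the key cohomological step. Bass--Serre theory applied to the amalgam $\operatorname{Aut}_K(K[X,Y]) = \mathrm{Af}_2(K) *_{C} \mathrm{J}(K)$ (whose factors are Galois-stable, so the semidirect product with $G$ is again an amalgam acting on the same tree) gives only that the finite subgroup $\{(a_\sigma,\sigma)\}$ attached to a cocycle fixes a vertex; equivalently, every cocycle is cohomologous to one with values in the affine group $\mathrm{Af}_2(K)$ or in the de Jonqui\`eres group $\mathrm{J}(K)$. That is not yet triviality of $H^1$: an amalgam can perfectly well have factors with nonvanishing $H^1$. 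To conclude that the cocycle is a coboundary you must additionally prove $H^1(G,\mathrm{Af}_2(K))=\{\ast\}$ and $H^1(G,\mathrm{J}(K))=\{\ast\}$, which is where Hilbert's Theorem 90 (the multiplicative version for $GL_2$ and the additive version, via the normal basis theorem, for the translation and coefficient parts) together with a degree filtration of the triangular group enter. This is exactly the second half of Kambayashi's argument, and since the statement you are proving \emph{is} Kambayashi's theorem, you cannot discharge this step by citing ``Kambayashi's theorem on the absence of separable forms''---that would be circular. With that supplement your outline becomes a correct rendering of the standard proof; the closing paragraph relating $A$ to Theorem \ref{thm:MSR} is a consistency check and carries no logical weight in the descent.
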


The following is the main result in this paper. 
\begin{thm} \label{thm:main}
Let $k$ be a field of characteristic zero and let $B := k^{[3]}$ be the polynomial ring in three variables over $k$. Then every retract of $B$ is isomorphic to the polynomial ring.  
\end{thm}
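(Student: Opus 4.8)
The plan is to classify a retract $A$ of $B = k^{[3]}$ by its transcendence degree $d = \trdeg_k A$, which satisfies $0 \le d \le 3$ since $d = \dim A \le \dim B = 3$. By Proposition \ref{prop:Costa}, $A$ is a regular UFD and is algebraically closed in $B$; since $k$ is algebraically closed in $B = k^{[3]}$, this forces $k$ to be algebraically closed in $A$, so (as $\ch k = 0$) $A$ is geometrically integral over $k$. The retraction $\varphi : B \to A$ also shows that $A$ is a finitely generated $k$-algebra with $A^* = k^*$. The extreme cases are immediate. If $d = 0$, then $A$ is algebraic over $k$ and contained in $B$, hence $A = k = k^{[0]}$. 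If $d = 3$, then $\ker \varphi$ is a prime of $B$ with $\dim(B/\ker\varphi) = \dim A = 3 = \dim B$; as $B$ is an equidimensional finitely generated domain, $\ker\varphi = 0$, so $\varphi$ is an isomorphism and $A \cong k^{[3]}$.

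For the remaining cases $d \in \{1,2\}$, the strategy is to first settle the problem over the algebraic closure $\ol{k}$ and then descend. Put $\ol{A} := \ol{k} \otimes_k A$. By Lemma \ref{lemma:extension}, $\ol{A}$ is a retract of $\ol{k} \otimes_k B = \ol{k}^{[3]}$, and since $A$ is geometrically integral, $\ol{A}$ is again an integral domain, of dimension $d$. Because $\ch k = 0$, every variety is resolvable (Hironaka) and every field extension is separable, so Lemma \ref{keylemma} applies over $\ol{k}$ and yields that $\ol{X} := \Spec \ol{A}$ is nonsingular with $\ol{A}^* = \ol{k}^*$, $\ol{A}$ a UFD, and $\bar{\kappa}(\ol{X}) = -\infty$. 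If $d = 2$, Theorem \ref{thm:MSR} immediately gives $\ol{X} \cong \A^2_{\ol{k}}$, i.e.\ $\ol{A} \cong \ol{k}^{[2]}$. If $d = 1$, then $\ol{X}$ is a nonsingular affine curve with $\bar{\kappa}(\ol{X}) = -\infty$; writing its smooth completion as $\ol{C}$ of genus $g$ with $r \ge 1$ points at infinity, one has $\bar{\kappa}(\ol{X}) = -\infty$ exactly when $2g - 2 + r < 0$, forcing $g = 0$ and $r = 1$, hence $\ol{X} \cong \A^1_{\ol{k}}$ and $\ol{A} \cong \ol{k}^{[1]}$.

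It remains to descend the isomorphism $\ol{k} \otimes_k A \cong \ol{k}^{[d]}$ from $\ol{k}$ to $k$. Since $\ch k = 0$, the algebraic extension $\ol{k}/k$ is separably generated. For $d = 2$ this is precisely the situation of Theorem \ref{thm:Kambayashi}, which gives $A \cong k^{[2]}$. For $d = 1$ one invokes the corresponding descent statement for the affine line, namely that in characteristic zero there are no nontrivial separable forms of $\A^1$, so $\ol{k} \otimes_k A \cong \ol{k}^{[1]}$ forces $A \cong k^{[1]}$. Combining the four cases, every retract of $k^{[3]}$ is isomorphic to a polynomial ring.

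I expect the main obstacle to be the passage between $k$ and $\ol{k}$: one must verify that the retract property, integrality, and dimension are all preserved under the base change $-\otimes_k \ol{k}$ (so that Lemma \ref{keylemma} is genuinely applicable over $\ol{k}$), and conversely that the polynomial-ring structure descends. The descent for $d = 2$ is handled cleanly by Theorem \ref{thm:Kambayashi}, so the delicate points are securing the geometric integrality of $A$ (which is what makes $\ol{A}$ a domain, and which rests on $k$ being algebraically closed in $A$) together with having at hand a form-of-the-line theorem for the case $d = 1$.
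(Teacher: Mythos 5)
Your proof is correct, and in the central case $d=2$ it is exactly the paper's argument: extend scalars to $\ol{k}$ via Lemma \ref{lemma:extension}, apply Lemma \ref{keylemma} (its resolvability and separability hypotheses being automatic in characteristic zero), invoke Theorem \ref{thm:MSR} to get $\ol{k}\otimes_k A \cong \ol{k}^{[2]}$, and descend via Theorem \ref{thm:Kambayashi}. The differences are in the peripheral cases. For $d=3$, the paper argues that $Q(B)$ is algebraic over $Q(A)$, so Proposition \ref{prop:Costa}(1) gives $A=B$ outright; your kernel-dimension argument is a fine substitute. For $d=1$, the paper simply quotes Costa \cite{Cos77}, whose Theorem 3.5 already covers retracts of transcendence degree one (in any characteristic), whereas you rerun the geometric machinery over $\ol{k}$ --- Lemma \ref{keylemma} plus the fact that a smooth affine curve with $\bar{\kappa}=-\infty$ has $2g-2+r<0$, forcing $g=0$, $r=1$ --- and then descend using triviality of separable forms of $\A^1$. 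That forms-of-the-line theorem is true and classical (over a perfect field the unique place at infinity of the smooth completion is rational, so the completion is $\BP^1_k$ and the curve is $\A^1_k$), but it is an extra external input not assembled in the paper, so you should cite or prove it; the paper's route through Costa avoids it, and is also what keeps the low-dimensional cases characteristic-free. Finally, the geometric integrality you flag as the delicate point is automatic: since $\ol{k}$ is flat over $k$, the map $A\otimes_k\ol{k}\to B\otimes_k\ol{k}=\ol{k}^{[3]}$ is injective, so $A\otimes_k\ol{k}$ is a subring of a domain and hence a domain --- no separate argument that $k$ is algebraically closed in $A$ is needed, though the one you give is also valid.
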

\begin{proof}
Let $A$ is a retract of $B$ and let $d := \trdeg_k(A)$. Clearly, if $d = 0$, then $A = k$. By Proposition \ref{prop:Costa}, $A$ is algebraically closed in $B$. Hence, if $d = 3$, then $A = B = k^{[3]}$. If $d = 1$, then we already know that $A \cong k^{[1]}$ by \cite[Theorem 3.5]{Cos77}.

Suppose that $d = 2$. Let $K$ be an algebraic closure of $k$. Set $A_K := A \otimes_kK$ and $B_K := B \otimes_kK$. It follows from Lemma \ref{lemma:extension} that $A_K$ is also retract of $B_K = K^{[3]}$. Set $X = \Spec A_K$. By using Lemma \ref{keylemma}, we have $X$ is a nonsingular, factorial surface over $K$ and $(A_K)^* = K^*$. Therefore it follows from Theorem \ref{thm:MSR} that $A_K \cong K^{[2]}$. Applying Theorem \ref{thm:Kambayashi} for $A_K$, we have $A \cong k^{[2]}$.  
\end{proof}

\begin{remark}
{\rm 
In Lemma \ref{keylemma}, we don't know whether $Q(B)$ is separably generated over $Q(A)$ or not in general. Of course, if it is true in general, then Theorem \ref{thm:main} holds true for any characteristic. 
}
\end{remark}

\section*{Acknowledgments}
The author is sincerely grateful to Professor Dayan Liu for letting me know Costa's question.


\end{document}